\documentclass[10pt]{amsart}
\usepackage{amsmath,amsfonts,amssymb,color,a4,epsfig,graphics}
\usepackage[utf8]{inputenc}
\usepackage[T1]{fontenc}
\usepackage[english]{babel}
\usepackage{enumerate, mathrsfs}
\usepackage{tikz}
\usetikzlibrary{positioning, arrows.meta, fit}
\usepackage{pgfplots}
\pgfplotsset{compat=1.18}
\usepackage{pgfplotstable}
\usepackage{graphicx}
\usepackage{caption}
\usepackage[version=4]{mhchem}
\usepackage{pifont}
\usepackage{mathtools}
\usepackage{booktabs}
\usepackage[a4paper, margin=2.75cm]{geometry}
\usepackage[colorlinks=true, linkcolor=blue, citecolor=red, urlcolor=blue]{hyperref}
\usepackage{listings}
\usepackage{xcolor}

\lstdefinestyle{python}{
    language=Python,
    basicstyle=\ttfamily\small,
    keywordstyle=\color{blue},
    commentstyle=\color{gray},
    stringstyle=\color{red},
    showstringspaces=false,
    tabsize=4,
    breaklines=true,
    frame=single,
    numbers=left,
    numberstyle=\tiny,
    captionpos=b
}

\def\build#1_#2^#3{\mathrel{\mathop{\kern 0pt#1}\limits_{#2}^{#3}}}

\newcommand{\1}{{\mathbb{I}}}

\newcommand{\ind}{\operatorname{index}}

\newif\ifdraft
\drafttrue

\ifdraft
  \typeout{Draft mode enabled}
\fi

\newtheorem{theorem}{Theorem}[section]
\newtheorem{prop}[theorem]{Proposition}
\newtheorem{lemma}[theorem]{Lemma}
\newtheorem{remark}[theorem]{Remark}
\newtheorem{example}[theorem]{Example}
\newtheorem{definition}[theorem]{Definition}
\newtheorem{cor}[theorem]{Corollary}

\numberwithin{equation}{section}
\setcounter{tocdepth}{1}
\setlength{\emergencystretch}{3em}

\begin{document}

\title{Boundary Cochains and the Toeplitz Index on the Half-Lattice}
\author{Nassim Athmouni}
\address{Universit\'e de Gafsa, Facult\'e des Sciences, Campus Universitaire 2112, Gafsa, Tunisie}
\email{nassim.athmouni@fsgf.u-gafsa.tn}

\subjclass[2020]{47B35, 47L60, 17B65, 17B56, 47A53}
\keywords{Boundary-localized Lie algebras, unilateral shift, cohomology, finite-rank
commutators, half-lattice, Toeplitz algebra, edge modes, noncommutative geometry}


\begin{abstract}
We study the operator algebra induced by a rank-one boundary defect in a semi-infinite
tight-binding chain, $T = U + \varepsilon E$ on $\ell^2(\mathbb{Z}_{\ge 0})$, with $U$ the
\emph{forward} unilateral shift ($Ue_n = e_{n+1}$), $E = \langle e_0,\cdot\rangle e_0$, and
$\varepsilon \in \mathbb{C}$. The enlarged Lie algebra
$\mathcal{A} = \mathrm{span}\{U^a E (U^*)^b,\, U^n\}$ has finitely supported, trace-zero
commutators, $[\mathcal{A},\mathcal{A}] = \mathfrak{sl}_{\mathrm{fin}}$; the noncommutativity is
\emph{boundary-induced}, vanishing on the bulk $\mathrm{span}\{U^n\}$. We attach site-resolved
$2$-cochains $\omega_j(X,Y) = \langle e_j, [X,Y] e_j \rangle$, each a continuous
Chevalley--Eilenberg cocycle and a coboundary ($\omega_j=d\eta_j$); the cohomology
$H^2(\mathcal{A},\mathbb{C})$ is nonetheless infinite-dimensional, carried by the abelian bulk
and classifying central extensions (the simplest a Heisenberg extension), so the diagnostic role
of $\omega_j$ is site resolution, not cohomology.

Our main result gives these cochains a topological meaning. On the polynomial Toeplitz algebra
obtained by adjoining $U^*$, the total cochain computes a bilinear pairing of symbols that
specializes, for conjugate symbols $g=1/f$, to the Fredholm index:
\[
\sum_{j\ge0}\omega_j(T_f,T_g)=\mathrm{tr}\,[T_f,T_g]=\frac{1}{2\pi i}\oint_{\mathbb{S}^1} f\,dg,
\qquad
\sum_{j\ge0}\omega_j(U^n,(U^*)^n)=-n=\ind(U^n).
\]
The $\omega_j$ thus form a \emph{site-resolved index density}: the bulk winding number is a sum
of unit edge contributions $\omega_j(U^n,(U^*)^n)=-\mathbf{1}_{\{j<n\}}$, and the trace relation
$\sum_j\omega_j=0$ on $\mathcal{A}$ reflects the absence of winding for analytic symbols. For
spatially modulated couplings $U+\sum_j\varepsilon_j|e_j\rangle\langle e_j|$ with
$\varepsilon_j\to c$, the index is fixed by the bulk limit alone and undergoes a topological
transition as $|c|$ crosses $1$, independently of the boundary profile.
\end{abstract}

\maketitle
\tableofcontents

\section{Introduction}
\label{sec:introduction}

The operator $T = U + \varepsilon E$ arises naturally in the study of open quantum systems
with a boundary. Throughout this paper, $U$ denotes the \emph{forward unilateral shift} on
$\ell^2(\mathbb{Z}_{\ge 0})$, defined by
\begin{equation}\label{eq:forward-shift}
U e_n = e_{n+1}, \quad n \ge 0.
\end{equation}
Thus $U$ is an isometry ($U^*U = I$), and its adjoint $U^*$ is the backward shift:
$U^* e_n = e_{n-1}$ for $n \ge 1$, $U^* e_0 = 0$. The operator $T$ admits two complementary
physical interpretations.

\begin{itemize}
\item In the \textbf{Hermitian case} ($\varepsilon \in \mathbb{R}$), the related Hamiltonian
\begin{equation}\label{eq:spin-chain}
H = U + U^* + \varepsilon E
\end{equation}
describes the single-particle sector of the spin-$\tfrac{1}{2}$ XY chain with a boundary
magnetic field (a setting in which boundary and surface critical behavior is classical~\cite{Cardy84}),
or a tight-binding model with a boundary potential. In such models, a boundary
defect may host an exponentially localized edge mode~\cite{Hats93, SSH79}.

\item In the \textbf{non-Hermitian case} ($\varepsilon \in \mathbb{C}$), the operator
$T = U + \varepsilon E$ models the single-step operator of a boundary-deformed discrete-time
quantum walk (non-unitary for $\varepsilon\ne0$, since $T^*T = I + |\varepsilon|^2 E$); related
walks with boundary defects have been studied in photonic
lattices~\cite{Kitagawa12} and trapped ions~\cite{Xue15}, with related programmable quantum
simulators on optical-lattice and superconducting platforms~\cite{Kaufman_2016, Mi_2023}. For
the general spectral theory of non-selfadjoint operators we refer to~\cite{GK69}. The
perturbation $\varepsilon E$
is compact and rank-one, so the model does \emph{not} exhibit the non-Hermitian skin effect.
\end{itemize}

In both settings, the rank-one term $\varepsilon E$ models a boundary probe or defect. Our
focus is on its \emph{algebraic consequences}: we show that it induces a detectable deformation
of the operator algebra whose noncommutativity is generated by the boundary defect.
It connects to operator-algebraic work on the half-lattice~\cite{Athmouni-half} and on
the rigidity of (quasi-)Lie brackets~\cite{Athmouni-QL}, and to the
noncommutative-geometric viewpoint of~\cite{Connes94}.

The contrast between bulk and edge is already visible at the algebraic level. The polynomial algebra
$\langle T \rangle = \mathrm{span}\{T^n : n \ge 0\}$ is abelian, but the enlarged algebra
\[
\mathcal{A} := \mathrm{span}\{U^a E (U^*)^b,\, U^n : a,b,n \ge 0\}
\]
is non-abelian. Its commutators are finitely supported and of trace zero, and the
noncommutativity is generated entirely by the boundary term $E$ (it vanishes on
$\langle U\rangle=\mathrm{span}\{U^n\}$); the site-resolved cochains below make this bulk--edge dichotomy precise.
The corner operators $U^a E (U^*)^b = |e_a\rangle\langle e_b|$
represent hopping processes that involve the boundary site.

The central objects are the site-localized $2$-cochains
\[
\omega_j(X,Y) = \langle e_j, [X,Y] e_j \rangle, \quad j \in \mathbb{Z}_{\ge 0}.
\]
We prove that each $\omega_j$ is a $2$-cocycle and that each is a coboundary
($\omega_j = d\eta_j$), so $[\omega_j] = 0$. We also show that $H^2(\mathcal{A},\mathbb{C})$ is
\emph{not} trivial (the abelian bulk contributes nonzero classes), so the diagnostic value
of $\omega_j$ comes from site resolution rather than cohomological nontriviality. In fact the
bulk cocycles form an infinite-dimensional family classifying central extensions of
$\mathcal{A}$, the simplest of which is a Heisenberg central extension (with $[I,U]=c$ in the
extended bracket; Section~\ref{sec:central-ext}). Each finite
subfamily of $\{\omega_j\}$ is linearly independent, the full family obeying the single relation
$\sum_j \omega_j = 0$ in the diagonal cocycle space $Z^2_{\mathrm{diag}}(\mathcal{A},\mathbb{C})$.

\medskip
Section~\ref{sec:index} turns these cochains into a topological invariant. On the
polynomial Toeplitz algebra obtained by adjoining the backward shift $U^*$, the total cochain
computes a bilinear pairing of symbols that specializes, for conjugate symbols $g=1/f$, to the
Fredholm index:
\[
\sum_{j\ge0}\omega_j(T_f,T_g)=\frac{1}{2\pi i}\oint_{\mathbb{S}^1} f\,dg,
\qquad \sum_{j\ge0}\omega_j(U^n,(U^*)^n)=-n=\ind(U^n),
\]
so the $\omega_j$ form a site-resolved density for the bulk winding number. The vanishing $\sum_j\omega_j=0$ on $\mathcal{A}$ reflects
the absence of winding for analytic symbols.

\medskip
Table~\ref{tab:comparison} compares the construction with established index-theoretic boundary models.

\begin{table}[hbpt]
\centering
\caption{Comparison with established boundary models.}
\label{tab:comparison}
\begin{tabular}{l c c p{4.3cm}}
Model & Bulk invariant & Edge quantity & Algebraic structure \\
\hline
SSH \cite{SSH79} & $\mathbb{Z}$ winding & Edge modes & Abelian bulk algebra \\
Kitaev \cite{Kitaev01} & $\mathbb{Z}_2$ & Majorana modes & Clifford algebra \\
Floquet \cite{Lindner11} & Dynamical winding & Anomalous modes & Driven \\
\textbf{This work} & Winding $\mathrm{wind}(f)$ & $\sum_j\omega_j=-\mathrm{wind}(f)$ &
  Toeplitz Lie algebra; site-resolved index density $\{\omega_j\}$ \\
\end{tabular}
\end{table}

\medskip\noindent
The remainder is organized as follows. Section~\ref{sec:operator-framework} introduces the
operator-theoretic setting. Section~\ref{sec:cohomology} constructs $\omega_j$, shows each is
a coboundary, analyzes the diagonal cocycle space, and constructs the central extensions
carried by the bulk. Section~\ref{sec:index} proves the index identity: the total cochain
computes a pairing of symbols, equal to the Fredholm index for the conjugate pairing $g=1/f$,
with the $\omega_j$ a site-resolved index density. Section~\ref{sec:extensions} presents
finite-dimensional realizations. Section~\ref{sec:applications} gives spectral illustrations.
Section~\ref{sec:conclusion} summarizes. Appendix~\ref{app:numerics} describes numerical
methods.

\section{Operator-Theoretic Framework}
\label{sec:operator-framework}

\subsection{Banach and Lie Algebraic Frameworks}
\label{sec:boundary-Lie}

\begin{definition}[Banach algebra]\label{def:Banach-alg}
A \emph{Banach algebra} $(\mathfrak{A},\|\cdot\|)$ is a Banach space with associative
bilinear multiplication satisfying $\|xy\| \leq \|x\|\,\|y\|$.
\end{definition}

\begin{definition}[Banach--Lie algebra]\label{def:Banach-Lie}
A \emph{Banach--Lie algebra} $(\mathfrak{g},[\cdot,\cdot])$ is a Banach space with a
continuous antisymmetric bracket satisfying the Jacobi identity
$[X,[Y,Z]] + [Y,[Z,X]] + [Z,[X,Y]] = 0$.
\end{definition}

\begin{remark}[Topology and non-closedness of $\mathcal{A}$]\label{rem:topology}
We equip $\mathcal{A} \subset \mathcal{B}(\ell^2)$ with the operator norm. The subalgebra
$\mathcal{A}$ is \emph{not} norm-closed. To see this, consider the diagonal operator
$D = \mathrm{diag}(1, \tfrac{1}{2}, \tfrac{1}{3}, \ldots)$. Since its eigenvalues tend
to zero, $D$ is compact; hence $D \in \mathcal{K}(\ell^2)$ (see~\cite{Sim05} for trace ideals
and compact operators). However, $D$ is not a finite
linear combination of corner operators $|e_a\rangle\langle e_b|$ or powers $U^n$, so
$D \notin \mathcal{A}$. The finite-rank truncations
$D_N := \mathrm{diag}(1,\tfrac{1}{2},\ldots,\tfrac{1}{N},0,0,\ldots)$
belong to $\mathcal{A}$ and satisfy
\[
\|D_N - D\|_{\mathcal{B}(\ell^2)} = \sup_{k > N}\frac{1}{k} = \frac{1}{N+1} \xrightarrow{N\to\infty} 0.
\]
Thus $D$ lies in the norm-closure of $\mathcal{A}$ but not in $\mathcal{A}$ itself,
proving that $\mathcal{A}$ is not norm-closed.

\smallskip
\noindent\textit{Remark on strong vs.\ norm topology.}
The partial sums $S_N = \sum_{k=0}^{N} |e_k\rangle\langle e_k|$ converge to the
identity $I$ in the \emph{strong operator topology} (i.e., $\|S_N f - f\| \to 0$ for
each fixed $f \in \ell^2$), but \emph{not} in operator norm: indeed,
$\|(I - S_N)e_{N+1}\| = 1$ for every $N$, so $\|I - S_N\| = 1$ for all $N$.
In particular, the sequence $(S_N)$ is \emph{not} norm-Cauchy in $\mathcal{A}$, and
the above non-closedness argument relies on the sequence $(D_N)$, which \emph{is}
norm-Cauchy, not on $(S_N)$.

We work throughout with continuous cochains satisfying
$|\omega_j(X,Y)| \leq \|[X,Y]\| \leq 2\|X\|\|Y\|$.
\end{remark}

\begin{definition}[Spatially localized commutators]\label{def:localized-commutators}
A Lie subalgebra $\mathfrak{g} \subset \mathcal{B}(\ell^2(\mathbb{Z}_{\ge 0}))$ exhibits
\emph{boundary-localized noncommutativity} if there is a direct sum decomposition
$\mathfrak{g} = \mathfrak{g}_{\mathrm{bulk}} \oplus \mathfrak{g}_{\mathrm{edge}}$
(with $\mathfrak{g}_{\mathrm{bulk}} \cap \mathfrak{g}_{\mathrm{edge}} = \{0\}$) such that:
\begin{enumerate}
    \item $\mathfrak{g}_{\mathrm{bulk}} = \mathrm{span}\{U^n : n \ge 0\}$ is abelian;
    \item $\mathfrak{g}_{\mathrm{edge}} = \mathrm{span}\{U^a E (U^*)^b : a,b \ge 0\}$
    consists of finite-rank operators;
    \item every commutator involving $\mathfrak{g}_{\mathrm{edge}}$ is finite-rank and
    finitely supported.
\end{enumerate}
\end{definition}

\begin{remark}[Triviality of the intersection and direct sum decomposition]
The intersection $\mathfrak{g}_{\mathrm{bulk}} \cap \mathfrak{g}_{\mathrm{edge}} = \{0\}$
holds because no nonzero power $U^n$ ($n \ge 1$) is compact: $U^n$ is an isometry on a
separable infinite-dimensional Hilbert space, hence not compact. For $n=0$, $U^0 = I$
(the identity operator) belongs to $\mathfrak{g}_{\mathrm{bulk}}$ but not to
$\mathfrak{g}_{\mathrm{edge}}$, since $I$ is not compact. Thus
$\mathfrak{g}_{\mathrm{bulk}} \cap \mathfrak{g}_{\mathrm{edge}} = \{0\}$.

To verify the direct sum $\mathcal{A} = \mathfrak{g}_{\mathrm{bulk}} \oplus
\mathfrak{g}_{\mathrm{edge}}$, note that by definition every element of $\mathcal{A}$ is a
finite linear combination $\sum_n \alpha_n U^n + \sum_{a,b} \beta_{ab} U^a E (U^*)^b$.
Such a decomposition into a bulk part $X_{\mathrm{bulk}} = \sum_n \alpha_n U^n$ and an edge
part $X_{\mathrm{edge}} = \sum_{a,b} \beta_{ab} U^a E (U^*)^b$ is unique by the
intersection condition above. Thus $\mathcal{A}$ satisfies
Definition~\ref{def:localized-commutators}. We write
$\mathcal{A}_{\mathrm{bulk}} := \mathfrak{g}_{\mathrm{bulk}} = \mathrm{span}\{U^n : n\ge0\}$ and
$\mathcal{A}_{\mathrm{edge}} := \mathfrak{g}_{\mathrm{edge}}
= \mathrm{span}\{U^a E (U^*)^b : a,b\ge0\} = \mathrm{span}\{|e_a\rangle\langle e_b| : a,b\ge0\}$,
so that $\mathcal{A} = \mathcal{A}_{\mathrm{bulk}} \oplus \mathcal{A}_{\mathrm{edge}}$;
the elements of $\mathcal{A}_{\mathrm{edge}}$ are exactly the operators with finitely many
nonzero matrix entries.
\end{remark}

\subsection{Algebraic Structure on the Half-Lattice}
\label{sec:shift-structure}

Let $(e_n)_{n\ge0}$ be the canonical orthonormal basis of $\ell^2(\mathbb{Z}_{\ge 0})$.

\begin{definition}[Forward shift and boundary projector]\label{def:shift}
The \emph{forward unilateral shift} $U$ and boundary projection $E$ are:
\begin{align*}
U e_n &= e_{n+1}, \quad n \ge 0 \quad (U \text{ is an isometry}), \\
Ef &= \langle e_0, f \rangle e_0.
\end{align*}
The backward shift $U^*$ satisfies $U^* e_n = e_{n-1}$ ($n \ge 1$), $U^* e_0 = 0$.
The boundary-deformed shift is $T := U + \varepsilon E$.
\end{definition}

\begin{lemma}[Key auxiliary identity]\label{lem:EU-zero}
$EU = 0$, i.e., $E \circ U = 0$ as operators on $\ell^2(\mathbb{Z}_{\ge 0})$.
\end{lemma}

\begin{proof}
For any $f \in \ell^2(\mathbb{Z}_{\ge 0})$, $(Uf)(n) = f(n-1)$ for $n \ge 1$ and
$(Uf)(0) = 0$, since $U$ is the forward shift ($Ue_k = e_{k+1}$ means the zeroth component
of $Uf$ is always zero). Therefore
\[
(EUf) = \langle e_0, Uf \rangle e_0 = (Uf)(0) \cdot e_0 = 0.
\]
\end{proof}

\begin{remark}
The identity $EU = 0$ expresses the forward-shift boundary condition $(Uf)(0)=0$. It is what
makes the $n=0$ component of the eigenvalue equation decouple in the spectral analysis
(Lemma~\ref{lem:eigen}).
\end{remark}

\begin{remark}[Corner operators]\label{rem:corner}
For $a, b \ge 0$, the operator $U^a E (U^*)^b \in \mathcal{A}$ acts by
\[
(U^a E (U^*)^b) f = \langle e_b, f \rangle e_a = (|e_a\rangle\langle e_b|) f.
\]
Indeed: $((U^*)^b f)(n) = f(n+b)$, so $\langle e_0, (U^*)^b f\rangle = f(b) =
\langle e_b, f\rangle$. Then $E(U^*)^b f = \langle e_b, f\rangle e_0$.
Finally $U^a e_0 = e_a$ (since $U$ is the forward shift). Hence
$U^a E (U^*)^b f = \langle e_b, f\rangle e_a$, confirming $U^a E (U^*)^b = |e_a\rangle\langle e_b|$.
\end{remark}

\begin{prop}[Commutator of corner operators]\label{prop:corner-corner}
\[
[|e_a\rangle\langle e_b|,\, |e_c\rangle\langle e_d|]
= \delta_{b,c}\, |e_a\rangle\langle e_d| - \delta_{d,a}\, |e_c\rangle\langle e_b|.
\]
\end{prop}
\begin{proof}
$(|e_a\rangle\langle e_b|)(|e_c\rangle\langle e_d|) = \delta_{b,c}|e_a\rangle\langle e_d|$.
Subtract the reversed product.
\end{proof}

\begin{lemma}[Structure of the derived algebra]\label{lem:commutators-diagonal}
Let
\[
\mathfrak{sl}_{\mathrm{fin}} := \bigl\{\, F \in \mathcal{A}_{\mathrm{edge}}
: \mathrm{tr}\,F = 0 \,\bigr\}
\]
denote the finitely supported (i.e.\ finitely many nonzero matrix entries
$\langle e_a,Fe_b\rangle$) trace-zero operators. Then
\[
[\mathcal{A},\mathcal{A}] = \mathfrak{sl}_{\mathrm{fin}}.
\]
In particular, every commutator of elements of $\mathcal{A}$ is finitely supported with trace
zero. Such commutators are \emph{not} diagonal in general; for instance
$[U,E]=|e_1\rangle\langle e_0|$ and
$[|e_1\rangle\langle e_0|,|e_0\rangle\langle e_2|]=|e_1\rangle\langle e_2|$ are off-diagonal.
\end{lemma}

\begin{proof}
\emph{Inclusion $[\mathcal{A},\mathcal{A}] \subseteq \mathfrak{sl}_{\mathrm{fin}}$.}
By bilinearity of the bracket it suffices to treat commutators of generators. First,
$[U^m,U^n]=0$ (powers of a single operator commute). For an edge--edge pair,
Proposition~\ref{prop:corner-corner} gives
\[
[|e_a\rangle\langle e_b|,\,|e_c\rangle\langle e_d|]
= \delta_{b,c}\,|e_a\rangle\langle e_d| - \delta_{d,a}\,|e_c\rangle\langle e_b|,
\]
which has rank $\le 2$ and trace $\delta_{b,c}\delta_{a,d} - \delta_{d,a}\delta_{c,b} = 0$.
For a bulk--edge pair, a direct computation using $U^n e_k = e_{k+n}$ and
$(U^*)^n e_k = e_{k-n}$ (zero if $k<n$) gives
\begin{equation}\label{eq:bulk-edge-comm}
[U^n,\,|e_a\rangle\langle e_b|]
= |e_{a+n}\rangle\langle e_b| - |e_a\rangle\langle e_{b-n}|,
\end{equation}
with the convention that the second term is $0$ when $b<n$; this is finite-rank with trace
$\delta_{a+n,b}-\delta_{a,b-n} = 0$. A general commutator $[X,Y]$ with $X,Y\in\mathcal{A}$ is a
finite linear combination of such corner terms, hence finitely supported with trace zero.

\emph{Inclusion $\mathfrak{sl}_{\mathrm{fin}} \subseteq [\mathcal{A},\mathcal{A}]$.}
Fix $N\ge 1$ and set $M_N := \mathrm{span}\{|e_a\rangle\langle e_b| : 0 \le a,b \le N\}
\cong \mathfrak{gl}_{N+1}(\mathbb{C})$, a Lie subalgebra of $\mathcal{A}_{\mathrm{edge}}$. For
$a\ne b$, Proposition~\ref{prop:corner-corner} gives the off-diagonal unit as a commutator,
\[
|e_a\rangle\langle e_b| = [\,|e_a\rangle\langle e_b|,\;|e_b\rangle\langle e_b|\,],
\]
and for $a\ne b$ it gives the diagonal difference
\[
|e_a\rangle\langle e_a| - |e_b\rangle\langle e_b|
= [\,|e_a\rangle\langle e_b|,\;|e_b\rangle\langle e_a|\,].
\]
The off-diagonal units together with the diagonal differences span the trace-zero matrices
supported in $\{0,\dots,N\}$, i.e.\ $\mathfrak{sl}_{N+1}$. Hence every trace-zero operator
supported in $\{0,\dots,N\}$ lies in $[\mathcal{A},\mathcal{A}]$. Taking the union over all $N$
yields $\mathfrak{sl}_{\mathrm{fin}} \subseteq [\mathcal{A},\mathcal{A}]$.
\end{proof}

\begin{remark}[$\mathcal{A}_{\mathrm{edge}}$ is an ideal]\label{rem:edge-ideal}
Equation~\eqref{eq:bulk-edge-comm} shows $[U^n,\mathcal{A}_{\mathrm{edge}}]\subseteq
\mathcal{A}_{\mathrm{edge}}$, and $[\mathcal{A}_{\mathrm{edge}},\mathcal{A}_{\mathrm{edge}}]
\subseteq\mathcal{A}_{\mathrm{edge}}$ by Proposition~\ref{prop:corner-corner}. Hence
$\mathcal{A}_{\mathrm{edge}}$ is a Lie ideal of $\mathcal{A}$, with abelian quotient
$\mathcal{A}/\mathcal{A}_{\mathrm{edge}}\cong\mathrm{span}\{U^n:n\ge0\}$. This fact is used in
Theorem~\ref{thm:H2-vanishes}(3).
\end{remark}

\begin{lemma}[Cyclicity]\label{lem:irreducible}
For nonzero $f \in \ell^2$ with $\langle e_k,f\rangle \ne 0$, the span
$\{Xf : X \in \mathcal{A}_{\mathrm{edge}}\}$ is dense in $\ell^2$.
\end{lemma}
\begin{proof}
$|e_j\rangle\langle e_k| \in \mathcal{A}_{\mathrm{edge}}$ maps $f$ to $\langle e_k,f\rangle e_j$,
reaching every $e_j$.
\end{proof}

\begin{remark}[Diagonal evaluation and the trace relation]\label{rem:diagonal-eval}
The cochain $\omega_j(X,Y) = \langle e_j,[X,Y]e_j\rangle$ records only the $(j,j)$ diagonal
entry of the commutator $[X,Y]$, not the full operator. Since $[X,Y]\in\mathfrak{sl}_{\mathrm{fin}}$
by Lemma~\ref{lem:commutators-diagonal}, only finitely many indices $j$ contribute, and the
trace-zero property gives the single linear relation
\[
\sum_{j\ge0}\omega_j(X,Y) = \mathrm{tr}[X,Y] = 0
\qquad (X,Y\in\mathcal{A}).
\]
Thus the family $\{\omega_j\}$ reads off the diagonal of the commutator and is constrained by
this one relation.
\end{remark}

\begin{lemma}[Finite support]\label{lem:finite-support}
For any $X, Y \in \mathcal{A}$, there exists $N$ such that $\langle e_j,[X,Y]e_j\rangle = 0$
for all $j \ge N$.
\end{lemma}

\begin{proof}
$X$ and $Y$ are finite linear combinations of generators; the commutator involves only finitely
many indices.
\end{proof}

\begin{prop}[Abelian polynomial algebra]\label{prop:polyT}
$[T^m, T^n] = 0$ for all $m,n \ge 0$.
\end{prop}

\begin{proof}
Powers of a single operator commute.
\end{proof}

\subsubsection{Boundary-localized corrections}

\begin{lemma}[Telescoping identity]\label{lem:telescope}
For $m \ge 1$: $T^m - U^m = \varepsilon \sum_{j=0}^{m-1} U^{m-1-j} E T^j$.
\end{lemma}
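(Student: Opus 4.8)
The plan is to prove the telescoping identity $T^m - U^m = \varepsilon \sum_{j=0}^{m-1} U^{m-1-j} E T^j$ by induction on $m$, using the factorization $T = U + \varepsilon E$ to peel off one factor at a time. The base case $m=1$ reads $T - U = \varepsilon E$, which is immediate from the definition $T = U + \varepsilon E$ (with the convention $U^0 = I$ and $T^0 = I$, so the right-hand side is $\varepsilon U^0 E T^0 = \varepsilon E$).

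For the inductive step, assume the identity holds for some $m \ge 1$. I would write $T^{m+1} = T \cdot T^m = (U + \varepsilon E) T^m = U T^m + \varepsilon E T^m$, and then substitute $T^m = U^m + \varepsilon \sum_{j=0}^{m-1} U^{m-1-j} E T^j$ from the induction hypothesis only into the first summand $U T^m$. This gives
\[
T^{m+1} = U^{m+1} + \varepsilon \sum_{j=0}^{m-1} U^{m-j} E T^j + \varepsilon E T^m.
\]
The first two terms already match the desired expression $U^{m+1} + \varepsilon \sum_{j=0}^{m} U^{m-j} E T^j$ restricted to indices $j = 0, \dots, m-1$, since $U^{(m+1)-1-j} = U^{m-j}$. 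The remaining term $\varepsilon E T^m$ is exactly the $j = m$ term of the target sum, because $U^{(m+1)-1-m} E T^m = U^0 E T^m = E T^m$. Hence the two sides agree and the induction closes.

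There is no serious obstacle here: the only point requiring care is bookkeeping of the exponent $m-1-j$ under the index shift $m \mapsto m+1$, and the treatment of the edge term $j = m$ produced by the $\varepsilon E T^m$ piece. One should also note that it is essential to expand $T^m$ (not $T$) in the product $T \cdot T^m$ — equivalently, to use $T^{m+1} = T \cdot T^m$ rather than $T^m \cdot T$ — so that the newly created $E$ factor sits at the far left and the accumulated $E T^j$ blocks are simply shifted up by one application of $U$; expanding on the other side would mix the orders of $U$ and $E$ and obscure the pattern. Everything else is a direct computation in the associative algebra $\mathcal{B}(\ell^2(\mathbb{Z}_{\ge 0}))$.
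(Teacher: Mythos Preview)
Your proof is correct and follows the same inductive strategy as the paper. The only difference is the choice of splitting: you write $T^{m+1} = U\,T^m + \varepsilon E\,T^m$ and feed the hypothesis into $U\,T^m$, whereas the paper uses $T^{m+1}-U^{m+1} = T(T^m-U^m)+(T-U)U^m$ and must then replace the leading $T$ by $U$, a step that implicitly relies on the operator-specific relation $EU=0$. Your variant is in fact slightly cleaner---it is the purely algebraic telescoping $A^m-B^m=\sum_{j=0}^{m-1}B^{\,m-1-j}(A-B)A^{j}$ and needs no special properties of $U$ and $E$.
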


\begin{proof}
We proceed by induction on $m$. For $m = 1$: $T - U = \varepsilon E$, which matches
the formula (the single term $j=0$ gives $U^0 E T^0 = E$). Assume the identity holds
for $m-1$. Then
\[
T^m = T \cdot T^{m-1} = (U + \varepsilon E) T^{m-1}
= U T^{m-1} + \varepsilon E T^{m-1}.
\]
Now $U^m = U \cdot U^{m-1}$, so
\[
T^m - U^m = U(T^{m-1} - U^{m-1}) + \varepsilon E T^{m-1}.
\]
By the induction hypothesis,
$T^{m-1} - U^{m-1} = \varepsilon \sum_{j=0}^{m-2} U^{m-2-j} E T^j$, hence
\[
T^m - U^m = \varepsilon \sum_{j=0}^{m-2} U^{m-1-j} E T^j + \varepsilon E T^{m-1}
= \varepsilon \sum_{j=0}^{m-1} U^{m-1-j} E T^j.
\]
\end{proof}

\begin{lemma}[Support localization]\label{lem:support}
$T^m - U^m$ has rank $\le m$ and range in $\mathrm{span}\{e_0,\dots,e_{m-1}\}$.
\end{lemma}

\begin{proof}
By Lemma~\ref{lem:telescope}, $T^m - U^m = \varepsilon \sum_{j=0}^{m-1} U^{m-1-j} E T^j$.
Each term $U^{m-1-j} E T^j$ has range in $\mathrm{span}\{e_{m-1-j}\}$ (since $E$ projects
onto $e_0$ and $U^{m-1-j} e_0 = e_{m-1-j}$), hence rank at most $1$. The sum of $m$
rank-one operators has rank at most $m$, and its range is contained in
$\mathrm{span}\{e_0, e_1, \dots, e_{m-1}\}$.
\end{proof}

\subsection{Quantitative bounds and essential spectrum}

\begin{prop}[Norm and rank bounds]\label{prop:bounds}
For all $m, n \ge 1$:
\[
\|[T^m, U^n]\| \le 2\bigl((1+|\varepsilon|)^m - 1\bigr)
\le 2|\varepsilon|\, m\, (1 + |\varepsilon|)^{m-1},
\quad \mathrm{rank}[T^m, U^n] \le m.
\]
\end{prop}
\begin{proof}
From Lemma~\ref{lem:telescope}, $[T^m,U^n] = [T^m - U^m, U^n]$, a sum of $m$ rank-one terms.
The norm bound $\|[T^m,U^n]\| \le 2|\varepsilon|\sum_{j=0}^{m-1}(1+|\varepsilon|)^j
= 2((1+|\varepsilon|)^m-1)$ follows from $\|U\|=1$ and $\|E\|=1$.
For the second inequality, apply the mean value theorem to $f(t) = (1+t)^m$ on $[0,|\varepsilon|]$:
$f(|\varepsilon|) - f(0) = f'(\xi) \cdot |\varepsilon|$ for some $\xi \in (0,|\varepsilon|)$.
Since $f'(t) = m(1+t)^{m-1}$ is increasing, $f'(\xi) \le f'(|\varepsilon|)
= m(1+|\varepsilon|)^{m-1}$, giving $(1+|\varepsilon|)^m - 1 \le |\varepsilon| \cdot m (1+|\varepsilon|)^{m-1}$.
\end{proof}

\begin{theorem}[Essential spectrum preservation]\label{thm:essential-spectrum}
$\sigma_{\mathrm{ess}}(T) = \mathbb{S}^1$. Since $\varepsilon E$ is compact (rank one),
Weyl's theorem on the invariance of the essential spectrum under compact perturbations
gives $\sigma_{\mathrm{ess}}(T) = \sigma_{\mathrm{ess}}(U)$. The essential spectrum of the
forward unilateral shift is $\sigma_{\mathrm{ess}}(U) = \mathbb{S}^1$
(see, e.g.,~\cite[Ch.~II, \S7]{BotSil06} or~\cite[Vol.~1, Lec.~3]{Nik02}),
so $\sigma_{\mathrm{ess}}(T) = \mathbb{S}^1$, and the model does not exhibit the
non-Hermitian skin effect.
\end{theorem}

\section{Cochains, Coboundaries, and the Boundary Cocycle System}
\label{sec:cohomology}

\subsection{Site-localized cochains}

We work in the continuous Chevalley--Eilenberg complex of $\mathcal{A}$, equipped with the
operator norm, and trivial coefficients $\mathbb{C}$. Here $\mathcal{A}$ is a normed Lie
subalgebra of the Banach--Lie algebra $(\mathcal{B}(\ell^2),[\cdot,\cdot])$
(Definition~\ref{def:Banach-Lie}); it is not itself
complete (Remark~\ref{rem:topology}), so we use continuous (operator-norm bounded) cochains
throughout. For the algebraic background on Lie-algebra cohomology see,
e.g.,~\cite{Hall,Humphreys}.

\begin{definition}[Boundary cochains]\label{def:boundary-cocycles}
For each $j \in \mathbb{Z}_{\ge 0}$, define
\[
\omega_j : \mathcal{A} \times \mathcal{A} \to \mathbb{C},
\quad \omega_j(X,Y) = \langle e_j, [X,Y] e_j \rangle.
\]
\end{definition}

\begin{prop}[Cocycle property]\label{prop:2cocycle}
Each $\omega_j$ is a continuous antisymmetric Chevalley--Eilenberg $2$-cocycle:
\begin{enumerate}
\item \emph{(Antisymmetry)} $\omega_j(X,Y) = -\omega_j(Y,X)$ for all $X,Y\in\mathcal{A}$.
\item \emph{(Cocycle identity)} $d\omega_j(X,Y,Z) := \omega_j([X,Y],Z) + \omega_j([Y,Z],X) + \omega_j([Z,X],Y) = 0$.
\item \emph{(Continuity)} $|\omega_j(X,Y)| \le \|[X,Y]\| \le 2\|X\|\|Y\|$.
\end{enumerate}
\end{prop}
\begin{proof}
(1) $\omega_j(X,Y) = \langle e_j,[X,Y]e_j\rangle = -\langle e_j,[Y,X]e_j\rangle = -\omega_j(Y,X)$.

(2) By linearity of $A \mapsto \langle e_j, A e_j\rangle$:
\[
d\omega_j(X,Y,Z) = \langle e_j,\bigl([[X,Y],Z]+[[Y,Z],X]+[[Z,X],Y]\bigr)e_j\rangle = 0,
\]
where the bracket expression vanishes by the Jacobi identity.

(3) $|\omega_j(X,Y)| = |\langle e_j,[X,Y]e_j\rangle| \le \|[X,Y]\| \le 2\|X\|\|Y\|$.
\end{proof}

\begin{definition}[Diagonal cocycle space]\label{def:Z2diag}
Let $\delta:\mathfrak{sl}_{\mathrm{fin}}\to\mathfrak{sl}_{\mathrm{fin}}$ be the diagonal
projection $\delta(F)=\sum_{j\ge0}\langle e_j,Fe_j\rangle\,|e_j\rangle\langle e_j|$. We define the
\emph{diagonal cocycle space} as the locally finite linear span of the boundary cochains,
\[
Z^2_{\mathrm{diag}}(\mathcal{A},\mathbb{C})
:= \Bigl\{\, \textstyle\sum_{j\ge0} c_j\,\omega_j : c_j\in\mathbb{C} \,\Bigr\},
\]
where the sum is required to be finite on each pair $(X,Y)$ (automatic, since by
Lemma~\ref{lem:finite-support} only finitely many $\omega_j(X,Y)$ are nonzero). Each element is
a continuous $2$-cocycle, and depends on $(X,Y)$ only through the diagonal
$\delta([X,Y])=\sum_j\omega_j(X,Y)\,|e_j\rangle\langle e_j|$ of the commutator; this is the sense
in which the cocycles of $Z^2_{\mathrm{diag}}$ \emph{factor through the diagonal of the
commutator}.
\end{definition}

\begin{example}[Non-triviality as cochains]\label{ex:nontrivial}
Let $X = |e_1\rangle\langle e_0|$, $Y = |e_0\rangle\langle e_1|$. Then
$[X,Y] = |e_1\rangle\langle e_1| - |e_0\rangle\langle e_0|$, so $\omega_1(X,Y) = 1 \ne 0$.
Each $\omega_j$ is nontrivial as a cochain, though coboundary as a cohomology class
(see Theorem~\ref{thm:H2-vanishes}).
\end{example}

\begin{table}[hbpt]
\centering
\caption{Cohomological summary of the half-lattice algebras.}
\label{tab:algebra-summary}
\begin{tabular}{@{}lccc@{}}
\toprule
Algebra & Generators & Class $[\omega_j]$ & Cocycle structure \\
\midrule
$\langle T \rangle$ & $\{T^n\}$ & $0$ & abelian; $\omega_j \equiv 0$; $H^2\ne0$ (forms on bulk) \\
$\mathcal{A}_{\mathrm{edge}}$ & $\{|e_a\rangle\langle e_b|\}$ & $0$ &
  $\{\omega_j\}$ diagonal cochains, relation $\sum_j\omega_j=0$ \\
$\mathcal{A}$ & $\langle U\rangle + \mathcal{A}_{\mathrm{edge}}$ & $0$ &
  same $\{\omega_j\}$; $H^2(\mathcal{A},\mathbb{C})\ne0$ (bulk) \\
\bottomrule
\end{tabular}
\end{table}

\subsection{Coboundary structure of the boundary cochains}

\begin{theorem}[Coboundary structure of the boundary cochains]\label{thm:H2-vanishes}
\begin{enumerate}
\item For each $j \ge 0$, the continuous $1$-cochain $\eta_j : \mathcal{A} \to \mathbb{C}$
defined by $\eta_j(A) = \langle e_j, Ae_j\rangle$ satisfies $\omega_j = d\eta_j$; hence each
$\omega_j$ is a coboundary and $[\omega_j] = 0$ in $H^2(\mathcal{A},\mathbb{C})$.
\item Any finite subfamily $\{\omega_j : 0 \le j \le M\}$ is linearly independent, while the
full family obeys the single relation $\sum_{j\ge0}\omega_j = 0$
(Remark~\ref{rem:diagonal-eval}). Consequently every element of
$Z^2_{\mathrm{diag}}(\mathcal{A},\mathbb{C})$ (Definition~\ref{def:Z2diag}) is of the form
$\sum_{j}c_j\,\omega_j$, the coefficients $(c_j)$ being unique modulo addition of a common
constant.
\item The cohomology $H^2(\mathcal{A},\mathbb{C})$ is \emph{not} trivial: the abelian bulk
$\mathrm{span}\{U^n:n\ge0\}$ already supports an infinite-dimensional space of nontrivial
\emph{continuous} $2$-cocycle classes. The diagnostic role of $\{\omega_j\}$ therefore rests on
their site resolution, not on cohomological nontriviality.
\end{enumerate}
\end{theorem}

\begin{proof}
\textbf{(1).} For $X,Y\in\mathcal{A}$,
\[
(d\eta_j)(X,Y) = \eta_j([X,Y]) = \langle e_j,[X,Y]e_j\rangle = \omega_j(X,Y),
\]
so $\omega_j = d\eta_j \in B^2(\mathcal{A},\mathbb{C})$ and $[\omega_j]=0$.

\textbf{(2).} \emph{Independence.} Suppose $\sum_{j=0}^{M}\lambda_j\,\omega_j = 0$. Fix
$i\in\{0,\dots,M\}$, choose any $k>M$, and set $X=|e_i\rangle\langle e_k|$,
$Y=|e_k\rangle\langle e_i|$. By Proposition~\ref{prop:corner-corner},
$[X,Y]=|e_i\rangle\langle e_i|-|e_k\rangle\langle e_k|$, so
$\omega_j(X,Y)=\delta_{j,i}-\delta_{j,k}$. Since $k>M$, evaluating the relation at $(X,Y)$
gives $\lambda_i=0$. As $i$ was arbitrary, the subfamily is linearly independent.
\emph{Relation.} By Lemma~\ref{lem:commutators-diagonal},
$\sum_{j}\omega_j(X,Y)=\mathrm{tr}[X,Y]=0$ for all $X,Y$, i.e.\ $\sum_j\omega_j=0$.
\emph{Spanning of $Z^2_{\mathrm{diag}}$.} By Definition~\ref{def:Z2diag} every
$\Omega\in Z^2_{\mathrm{diag}}$ is, by construction, of the form $\Omega=\sum_j c_j\,\omega_j$
(finite on each pair by Lemma~\ref{lem:finite-support}). The coefficients $(c_j)$ are determined
by $\Omega$ up to adding a common constant to all of them: if $\sum_j c_j\omega_j=\sum_j c_j'\omega_j$
then $\sum_j(c_j-c_j')\omega_j=0$, and evaluating at the pairs $(|e_i\rangle\langle e_k|,
|e_k\rangle\langle e_i|)$ above forces $c_i-c_i'=c_k-c_k'$ for all $i,k$, i.e.\ $c_j-c_j'$ is
constant in $j$.

\textbf{(3).} It suffices to exhibit \emph{continuous} $2$-cocycles that are not coboundaries.
Each $X\in\mathcal{A}$ has a unique decomposition $X=p_X(U)+X_{\mathrm{edge}}$ with
$p_X(z)=\sum_{m\ge0}\alpha_m(X)z^m$ a polynomial and $X_{\mathrm{edge}}\in\mathcal{A}_{\mathrm{edge}}$.
For an antisymmetric array $(B_{mn})$ set
\[
\Omega_B(X,Y):=\sum_{m,n}\alpha_m(X)\,\alpha_n(Y)\,B_{mn},
\]
so that $\Omega_B$ is bilinear, antisymmetric, and vanishes whenever $X$ or $Y$ lies in
$\mathcal{A}_{\mathrm{edge}}$. For $k\ge1$ let $\Omega_k:=\Omega_{B^{(k)}}$ with
$B^{(k)}_{0k}=-B^{(k)}_{k0}=1$ and all other entries zero, i.e.\
$\Omega_k(X,Y)=\alpha_0(X)\alpha_k(Y)-\alpha_k(X)\alpha_0(Y)$.

\emph{Cocycle.} Since $\mathcal{A}_{\mathrm{edge}}$ is an ideal with abelian quotient
$\mathrm{span}\{U^n\}$ (Remark~\ref{rem:edge-ideal}), each term of
$d\Omega_B(X,Y,Z)=\Omega_B([X,Y],Z)+\Omega_B([Y,Z],X)+\Omega_B([Z,X],Y)$ vanishes: if all three
arguments are bulk every bracket is $0$; if at least one is in $\mathcal{A}_{\mathrm{edge}}$ then
in each term either the bracket lies in $\mathcal{A}_{\mathrm{edge}}$ or the remaining slot does,
and $\Omega_B$ kills it. Hence $\Omega_B\in Z^2(\mathcal{A},\mathbb{C})$.

\emph{Continuity.} The coefficient functionals are bounded in operator norm. Passing to the
Calkin algebra (whose quotient norm is the essential norm $\|\cdot\|_{\mathrm{ess}}$) annihilates the finite-rank part $X_{\mathrm{edge}}$ and sends $U$ to a unitary
of spectrum $\mathbb{S}^1$ (Theorem~\ref{thm:essential-spectrum}; see also
\cite{Conway90,Dou98}); since a polynomial in a
unitary is normal, $\|p_X\|_\infty=\|p_X(U)\|_{\mathrm{ess}}=\|X\|_{\mathrm{ess}}\le\|X\|$. By
Cauchy's estimate, $|\alpha_m(X)|=|p_X^{(m)}(0)|/m!\le\|p_X\|_\infty\le\|X\|$. Therefore
$|\Omega_k(X,Y)|\le 2\|X\|\,\|Y\|$, so each $\Omega_k$ is a continuous cocycle.

\emph{Non-triviality and infinite dimension.} With $I=U^0$ one has
$\Omega_j(I,U^k)=\delta_{jk}$. If $\sum_k\lambda_k\Omega_k$ (a finite combination) were a
coboundary $d\phi$, then for each $k$,
$\lambda_k=\bigl(\sum_i\lambda_i\Omega_i\bigr)(I,U^k)=\phi([I,U^k])=\phi(0)=0$. Hence the classes
$\{[\Omega_k]\}_{k\ge1}$ are linearly independent and nonzero, so
$H^2(\mathcal{A},\mathbb{C})$ is infinite-dimensional; in particular it is nonzero.
\end{proof}

\begin{remark}[On terminology and topology]
We avoid the term \emph{basis} for $\{\omega_j\}$: because of the relation
$\sum_j\omega_j=0$, the family is linearly independent only in finite subfamilies and spans
$Z^2_{\mathrm{diag}}$ only modulo that relation. We work with algebraic (not topological)
spanning; a topological statement would require fixing a Banach or Fr\'echet topology on
$Z^2_{\mathrm{diag}}$, which we do not pursue here.
\end{remark}

\begin{remark}[Diagnostic value despite triviality of the classes]
Although each $\omega_j$ is a coboundary (and although $H^2(\mathcal{A},\mathbb{C})$ is itself
nontrivial through the bulk), the map $j \mapsto \omega_j(X,Y)$ is an intrinsic site-resolved
quantity: it reads the $j$-th diagonal entry of $[X,Y]$, vanishing for $j \gg 0$ (bulk) and
being nonzero near $j = 0$ (edge). This yields a bulk--edge dichotomy at the cochain
level, independent of cohomological considerations. Table~\ref{tab:algebra-summary} collects
the cocycle structure of the three algebras $\langle T\rangle$, $\mathcal{A}_{\mathrm{edge}}$,
and $\mathcal{A}$.
\end{remark}

\begin{remark}[Distinction from Virasoro cocycle]
The Virasoro cocycle is translation-invariant and generates a nontrivial central extension.
Our cochains $\omega_j$ are spatially localized, coboundaries in the full complex, and reflect
geometric inhomogeneity rather than global symmetry breaking.
\end{remark}

\subsection{Central extensions from the bulk}
\label{sec:central-ext}

The non-vanishing of $H^2(\mathcal{A},\mathbb{C})$ found in Theorem~\ref{thm:H2-vanishes}(3) has
a concrete structural meaning: it classifies the nontrivial one-dimensional central extensions
of $\mathcal{A}$. Recall that a continuous $2$-cocycle $\Omega\in Z^2(\mathcal{A},\mathbb{C})$
determines a central extension
\[
0 \longrightarrow \mathbb{C}c \longrightarrow \widetilde{\mathcal{A}}_\Omega
\longrightarrow \mathcal{A}\longrightarrow 0,
\]
where $\widetilde{\mathcal{A}}_\Omega := \mathcal{A}\oplus\mathbb{C}c$ carries the bracket
\begin{equation}\label{eq:central-bracket}
[\,X\oplus s c,\; Y\oplus t c\,]_\Omega := [X,Y]\oplus \Omega(X,Y)\,c,
\qquad c \text{ central}.
\end{equation}
The bracket~\eqref{eq:central-bracket} satisfies the Jacobi identity precisely because $\Omega$
satisfies the cocycle identity, and $\widetilde{\mathcal{A}}_\Omega$ is again a normed Lie
algebra with continuous bracket when $\Omega$ is continuous, since both $[\cdot,\cdot]$
and $\Omega$ are. The extension splits, i.e.\ is equivalent to the trivial extension
$\mathcal{A}\oplus\mathbb{C}c$, if and only if $\Omega$ is a coboundary; equivalence classes of
central extensions are in bijection with $H^2(\mathcal{A},\mathbb{C})$.

By Theorem~\ref{thm:H2-vanishes}(1) the diagnostic cochains $\omega_j$ are coboundaries and hence
yield only split extensions. The nontrivial extensions originate entirely in the abelian bulk.
Recall from the proof of Theorem~\ref{thm:H2-vanishes}(3) that each $X\in\mathcal{A}$ has a unique
bulk symbol $p_X(z)=\sum_m\alpha_m(X)z^m$, that $|\alpha_m(X)|\le\|X\|$, and that for an
antisymmetric array $(B_{mn})$ the form
\[
\Omega_B(X,Y)=\sum_{m,n}\alpha_m(X)\,\alpha_n(Y)\,B_{mn}
\]
is a $2$-cocycle vanishing on $\mathcal{A}_{\mathrm{edge}}$.

\begin{prop}[A continuous family of nontrivial classes]\label{prop:central-family}
Let $B=(B_{mn})_{m,n\ge0}$ be antisymmetric with $\sum_{m,n}|B_{mn}|<\infty$ (in particular, any
finitely supported $B$). Then $\Omega_B$ is a continuous $2$-cocycle, with
\[
|\Omega_B(X,Y)|\le\Bigl(\textstyle\sum_{m,n}|B_{mn}|\Bigr)\|X\|\,\|Y\|,
\]
and the assignment $B \mapsto [\Omega_B]\in H^2(\mathcal{A},\mathbb{C})$ is linear and injective.
Consequently $H^2(\mathcal{A},\mathbb{C})$ is infinite-dimensional, and the central extensions
$\widetilde{\mathcal{A}}_{\Omega_B}$ are pairwise inequivalent for distinct $B$.
\end{prop}

\begin{proof}
That $\Omega_B$ is a cocycle was shown in Theorem~\ref{thm:H2-vanishes}(3). Using
$|\alpha_m(X)|\le\|X\|$,
\[
|\Omega_B(X,Y)|\le\sum_{m,n}|B_{mn}|\,|\alpha_m(X)|\,|\alpha_n(Y)|
\le\Bigl(\sum_{m,n}|B_{mn}|\Bigr)\|X\|\,\|Y\|,
\]
so $\Omega_B$ is continuous. The map $B\mapsto\Omega_B$ is clearly linear. If $\Omega_B=d\phi$ for
some $1$-cochain $\phi$, then $B_{mn}=\Omega_B(U^m,U^n)=\phi([U^m,U^n])=\phi(0)=0$; hence
$[\Omega_B]=0$ forces $B=0$, and $B\mapsto[\Omega_B]$ is injective. The antisymmetric finitely
supported arrays form an infinite-dimensional space, so $\dim H^2(\mathcal{A},\mathbb{C})=\infty$.
Finally, two central extensions are equivalent iff the defining cocycles are cohomologous, so
injectivity gives the inequivalence of the $\widetilde{\mathcal{A}}_{\Omega_B}$ for distinct $B$.
\end{proof}

The simplest nontrivial class has a familiar shape.

\begin{prop}[Heisenberg central extension]\label{prop:heisenberg}
For $\Omega_1(X,Y)=\alpha_0(X)\alpha_1(Y)-\alpha_1(X)\alpha_0(Y)$ (the case
$B_{01}=-B_{10}=1$), the elements $\widetilde I:=I\oplus0$, $\widetilde U:=U\oplus0$ and $c$ span,
inside $\widetilde{\mathcal{A}}_{\Omega_1}$, a copy of the three-dimensional Heisenberg Lie
algebra $\mathfrak{h}_3$:
\[
[\widetilde I,\widetilde U]_{\Omega_1}=c,\qquad [\widetilde I,c]=[\widetilde U,c]=0.
\]
\end{prop}

\begin{proof}
Since $\alpha_0(I)=\alpha_1(U)=1$ and $\alpha_1(I)=\alpha_0(U)=0$, we have $\Omega_1(I,U)=1$,
while $[I,U]=0$ in $\mathcal{A}$. Hence by~\eqref{eq:central-bracket},
$[\widetilde I,\widetilde U]_{\Omega_1}=[I,U]\oplus\Omega_1(I,U)c=c$. As $c$ is central,
$\mathrm{span}\{\widetilde I,\widetilde U,c\}$ is a Lie subalgebra with exactly the
Heisenberg relations.
\end{proof}

\begin{remark}
The algebra $\mathcal{A}$ thus carries Heisenberg-type central extensions, arising from
its abelian bulk $\mathrm{span}\{U^n\}$ \emph{independently of the boundary defect} $\varepsilon E$,
in contrast to the diagnostic cochains $\omega_j$, which are coboundaries
(Theorem~\ref{thm:H2-vanishes}(1)). The nontrivial cohomology of $\mathcal{A}$ thus sits in
the abelian bulk rather than at the edge: the boundary defect manifests in the
\emph{site-resolved} (yet cohomologically trivial) cochains $\{\omega_j\}$ at the edge, while the
nontrivial \emph{cohomology} is carried by the bulk. Unlike the Virasoro cocycle, the classes
$[\Omega_B]$ are not tied to a single translation-invariant central charge but form an
infinite-dimensional family indexed by antisymmetric symbols $B$.
\end{remark}
\section{The Boundary Cochains as a Site-Resolved Index}
\label{sec:index}

The trace relation $\sum_j\omega_j=0$ on $\mathcal{A}$ (Remark~\ref{rem:diagonal-eval}) is the
vanishing of a Fredholm index. We make this precise by adjoining
the backward shift $U^*$, passing from the analytic algebra $\mathcal{A}$ to the polynomial
Toeplitz Lie algebra. There the total cochain $\sum_j\omega_j$ computes the symbol pairing
$\frac{1}{2\pi i}\oint f\,dg$, equal to the Fredholm index $-\mathrm{wind}(f)$ for conjugate
symbols $g=1/f$, and the individual $\omega_j$ resolve that invariant over the boundary sites.

\subsection{The Toeplitz extension}
\label{sec:toeplitz-ext}

For a trigonometric polynomial $f(z)=\sum_k\hat f_k z^k$ on $\mathbb{S}^1$, let $T_f$ be the
associated Toeplitz operator on $\ell^2(\mathbb{Z}_{\ge 0})\cong H^2(\mathbb{S}^1)$; on generators
$T_{z^k}=U^k$ for $k\ge0$ and $T_{z^{-k}}=(U^*)^k$ for $k\ge0$. Set
\[
\mathcal{A}^\sharp := \mathrm{span}\{T_f : f \text{ a trigonometric polynomial}\}
  + \mathcal{A}_{\mathrm{edge}},
\]
the polynomial Toeplitz Lie algebra. Then $\mathcal{A}\subset\mathcal{A}^\sharp$ is its analytic
part (symbols $f\in\mathbb{C}[z]$), and the cochains $\omega_j(X,Y)=\langle e_j,[X,Y]e_j\rangle$
extend verbatim. As is classical, $\mathcal{A}^\sharp$ is commutative modulo
$\mathcal{A}_{\mathrm{edge}}$, realizing at the polynomial level the Toeplitz extension
\[
0 \longrightarrow \mathcal{K} \longrightarrow \mathcal{T}
  \longrightarrow C(\mathbb{S}^1) \longrightarrow 0
\]
of the compacts by the symbol algebra~\cite{BotSil06,Dou98,AriciMesland}. In this dictionary the
\emph{edge} ideal $\mathcal{A}_{\mathrm{edge}}$ is the (finitary) compact part and the
\emph{bulk} quotient is the symbol $f$ on the circle.

\subsection{The index identity}
\label{sec:index-identity}

\begin{lemma}[Powers of the shift]\label{lem:shift-powers}
For $n\ge1$,
\[
(U^*)^n U^n = I,\qquad U^n(U^*)^n = I-P_n,\qquad [U^n,(U^*)^n] = -P_n,
\]
where $P_n=\sum_{k=0}^{n-1}|e_k\rangle\langle e_k|$ is the orthogonal projection onto the first
$n$ sites. Consequently
\[
\omega_j\bigl(U^n,(U^*)^n\bigr) = -\,\mathbf{1}_{\{j<n\}},\qquad
\sum_{j\ge0}\omega_j\bigl(U^n,(U^*)^n\bigr) = -n.
\]
\end{lemma}

\begin{proof}
For all $k\ge0$, $(U^*)^n U^n e_k=(U^*)^n e_{k+n}=e_k$, so $(U^*)^n U^n=I$. For the other order,
$(U^*)^n e_k=e_{k-n}$ when $k\ge n$ and $(U^*)^n e_k=0$ when $k<n$; applying $U^n$ gives
$U^n(U^*)^n e_k=e_k$ for $k\ge n$ and $0$ for $k<n$, i.e.\ $U^n(U^*)^n=I-P_n$. Subtracting,
$[U^n,(U^*)^n]=(I-P_n)-I=-P_n$. The $(j,j)$ entry of $-P_n$ is $-1$ for $j<n$ and $0$ otherwise,
and $\mathrm{tr}(-P_n)=-n$.
\end{proof}

\begin{theorem}[Total cochain as a symbol pairing and Fredholm index]\label{thm:index}
For trigonometric polynomials $f,g$, the commutator $[T_f,T_g]$ is finite-rank, and
\[
\sum_{j\ge0}\omega_j(T_f,T_g)\;=\;\mathrm{tr}\,[T_f,T_g]\;=\;\frac{1}{2\pi i}\oint_{\mathbb{S}^1} f\,dg .
\]
In particular, with $f(z)=z^n$ and $g(z)=z^{-n}$ this recovers
$\sum_j\omega_j(U^n,(U^*)^n)=-n=\ind(U^n)$, and the cochain profile
$\omega_j(U^n,(U^*)^n)=-\mathbf{1}_{\{j<n\}}$ of Lemma~\ref{lem:shift-powers} displays the index
$-n$ as a sum of $n$ unit contributions localized at the boundary sites $j=0,\dots,n-1$.
\end{theorem}

\begin{proof}
By bilinearity and antisymmetry it suffices to evaluate on monomials $f=z^k$, $g=z^l$, i.e.\
$T_f=U^{(k)}$ and $T_g=U^{(l)}$, where $U^{(m)}:=U^m$ for $m\ge0$ and $U^{(m)}:=(U^*)^{-m}$ for
$m<0$. Each commutator $[U^{(k)},U^{(l)}]$ is a difference of corner operators
(Proposition~\ref{prop:corner-corner} together with Lemma~\ref{lem:shift-powers}), hence
finite-rank; therefore $[T_f,T_g]$ is finite-rank and
$\sum_j\omega_j(T_f,T_g)=\sum_j\langle e_j,[T_f,T_g]e_j\rangle=\mathrm{tr}[T_f,T_g]$. For the trace,
note that $[U^{(k)},U^{(l)}]$ maps each $e_b$ into $\mathbb{C}\,e_{b+k+l}$, so it is a combination of
corner operators $|e_{b+k+l}\rangle\langle e_b|$ and its diagonal entries vanish unless $k+l=0$;
hence $\mathrm{tr}[U^{(k)},U^{(l)}]=0$ for $k+l\ne0$. By
Lemma~\ref{lem:shift-powers}, $\mathrm{tr}[U^{(n)},U^{(-n)}]=\mathrm{tr}[U^n,(U^*)^n]=-n$ and
$\mathrm{tr}[U^{(-n)},U^{(n)}]=+n$ for $n\ge1$. Hence $\mathrm{tr}[T_{z^k},T_{z^l}]=l\,\delta_{k+l,0}$.
On the other hand
\[
\frac{1}{2\pi i}\oint_{\mathbb{S}^1} z^k\,d(z^l)
=\frac{l}{2\pi i}\oint_{\mathbb{S}^1} z^{k+l-1}\,dz
=l\,\delta_{k+l,0}.
\]
The two expressions agree on monomials, and bilinearity extends the identity to all
trigonometric polynomials.
\end{proof}

\begin{example}[The index density for $n=2$]\label{ex:n2}
Take $f(z)=z^2$ and $g(z)=z^{-2}$, so $T_f=U^2$ and $T_g=(U^*)^2$. By
Lemma~\ref{lem:shift-powers}, $[U^2,(U^*)^2]=-P_2$, the negative of the projection onto
$\mathrm{span}\{e_0,e_1\}$, whence
\[
\omega_0=\omega_1=-1,\qquad \omega_j=0\ (j\ge2),\qquad
\sum_{j\ge0}\omega_j=-2=\ind(U^2).
\]
The index $-2$ is carried one unit per boundary site over the first two sites and is invisible in
the bulk. Figure~\ref{fig:index-density} shows this step profile for $n=1,2,3$.
\end{example}

\begin{figure}[htbp]
\centering
\begin{tikzpicture}
\begin{axis}[
  width=0.82\textwidth, height=6cm,
  ybar=1.2pt, bar width=6pt,
  xlabel={site $j$}, ylabel={$-\,\omega_j(U^n,(U^*)^n)$},
  xtick={0,1,2,3,4,5}, ymin=0, ymax=1.18, xmin=-0.5, xmax=5.5,
  ymajorgrids, tick align=outside,
  legend style={at={(0.98,0.98)},anchor=north east,font=\footnotesize,draw=none,fill=none},
  title={\footnotesize Site-resolved index density $-\omega_j(U^n,(U^*)^n)=\mathbf{1}_{\{j<n\}}$},
]
\addplot+[draw=black] coordinates {(0,1) (1,0) (2,0) (3,0) (4,0) (5,0)};
\addlegendentry{$n=1$:\ $\sum_j\omega_j=-1=\ind(U)$}
\addplot+[draw=black] coordinates {(0,1) (1,1) (2,0) (3,0) (4,0) (5,0)};
\addlegendentry{$n=2$:\ $\sum_j\omega_j=-2=\ind(U^2)$}
\addplot+[draw=black] coordinates {(0,1) (1,1) (2,1) (3,0) (4,0) (5,0)};
\addlegendentry{$n=3$:\ $\sum_j\omega_j=-3=\ind(U^3)$}
\end{axis}
\end{tikzpicture}
\caption{Site-resolved index density $-\omega_j(U^n,(U^*)^n)=\mathbf{1}_{\{j<n\}}$ for
$n=1,2,3$. Each profile is a unit step of width $n$ on the boundary sites; its area equals
$n=-\ind(U^n)=\mathrm{wind}(z^n)$ (Theorem~\ref{thm:index}). The index is distributed
one unit per edge site and vanishes in the bulk.}
\label{fig:index-density}
\end{figure}

\begin{remark}[Bulk--edge correspondence]\label{rem:bulk-edge-index}
Theorem~\ref{thm:index} expresses the bulk--edge correspondence through the cochains. The winding number $\mathrm{wind}(f)=\frac{1}{2\pi i}\oint_{\mathbb{S}^1} f^{-1}\,df$ is a
\emph{bulk} quantity (it depends only on the symbol $f$ on the circle), whereas the cochains
$\omega_j$ live at the \emph{edge}, on the boundary sites $j$. For an invertible symbol the
Noether--Gohberg--Krein theorem gives $\ind(T_f)=-\mathrm{wind}(f)$~\cite{GK69,BotSil06},
and the trace form of Theorem~\ref{thm:index} extends from polynomials to smooth
symbols~\cite{HeltonHowe}, so that
\[
\ind(T_f)\;=\;-\,\mathrm{wind}(f)\;=\;\sum_{j\ge0}\omega_j\bigl(T_f,T_{f^{-1}}\bigr).
\]
Thus $\{\omega_j\}$ is a \emph{site-resolved index density}: the bulk invariant is the sum of
edge contributions, which are localized near $j=0$. The vanishing $\sum_j\omega_j=0$ on the
analytic algebra $\mathcal{A}$ (Remark~\ref{rem:diagonal-eval}) follows because analytic
symbols $f\in\mathbb{C}[z]$ extend holomorphically to the disk and have winding number zero, so
no index appears until the backward shift $U^*$ is adjoined; cf.\ the index-theoretic
treatments of the bulk--edge correspondence in \cite{AriciMesland,ProdanSchulzBaldes,Thiang,Kasparov80}.
\end{remark}

\begin{remark}[Relation to the boundary defect]\label{rem:defect-index}
The deformed operator $T=U+\varepsilon E$ has the same symbol $z$ as $U$ (the defect
$\varepsilon E$ is compact), hence $\ind(T)=\ind(U)=-1$ for the Fredholm
range of parameters, independently of $\varepsilon$. The boundary defect therefore cannot change
the bulk winding number; its effect is confined to the point spectrum
(Corollary~\ref{prop:spectral}) and to the site-resolved profile of the cochains, consistent
with the dichotomy of Remark~\ref{rem:bulk-edge-index}.
\end{remark}

\subsection{Spatially modulated couplings and a topological transition}
\label{sec:modulated}

The single defect $\varepsilon E$ is the case $\varepsilon_j=\varepsilon\,\delta_{j0}$ of a
spatially modulated on-site coupling
\[
T_\varepsilon := U + D_\varepsilon,\qquad
D_\varepsilon := \sum_{j\ge0}\varepsilon_j\,|e_j\rangle\langle e_j|,\quad (\varepsilon_j)\in\ell^\infty.
\]
The index of \S\ref{sec:index-identity} is insensitive to a boundary-localized profile (if
$\varepsilon_j\to0$ then $D_\varepsilon$ is compact and
$\ind(T_\varepsilon)=\ind(U)=-1$), but it jumps once the coupling tends to a
nonzero \emph{bulk} value.

\begin{lemma}[Uniform coupling]\label{lem:uniform-index}
Let $T_c:=U+cI$ with $c\in\mathbb{C}$. Then $\ker T_c=\{0\}$, while $\ker T_c^*$ is spanned by the
vector $v=(v_n)_{n\ge0}$, $v_n=(-\bar c)^n$, which lies in $\ell^2(\mathbb{Z}_{\ge 0})$ if and
only if $|c|<1$. Hence $T_c$ is Fredholm for $|c|\ne1$, with
\[
\ind(T_c)=\dim\ker T_c-\dim\ker T_c^*=-\,\mathbf{1}_{\{|c|<1\}}=-\,\mathrm{wind}(z+c).
\]
\end{lemma}

\begin{proof}
The equation $T_c v=0$ reads $cv_0=0$ at $n=0$ and $v_{n-1}+cv_n=0$ for $n\ge1$; for $c\ne0$ this
forces $v_0=0$ and then $v_n=0$ for all $n$, while for $c=0$, $T_0=U$ is injective. Thus
$\ker T_c=\{0\}$. The adjoint equation $T_c^* v=(U^*+\bar c I)v=0$ reads $v_{n+1}+\bar c v_n=0$,
i.e.\ $v_n=(-\bar c)^n v_0$, which is square-summable iff $|c|<1$. Since
$\sigma_{\mathrm{ess}}(T_c)=\sigma_{\mathrm{ess}}(U)+c=\mathbb{S}^1+c$ (the unit circle centered
at $c$), $T_c$ is Fredholm iff $0\notin\mathbb{S}^1+c$, i.e.\ $|c|\ne1$. The index formula
follows; it equals $-\mathrm{wind}(z+c)$ because the symbol $z+c$ winds once about the origin
exactly when $|c|<1$ (Theorem~\ref{thm:index}, Remark~\ref{rem:bulk-edge-index}).
\end{proof}

\begin{theorem}[Bulk-driven topological transition]\label{thm:transition}
Let $\varepsilon_j\to c$ with $|c|\ne1$. Then $T_\varepsilon=U+D_\varepsilon$ is Fredholm with
symbol $z+c$, and
\[
\ind(T_\varepsilon)=\ind(U+cI)=-\,\mathbf{1}_{\{|c|<1\}}
=-\,\mathrm{wind}(z+c)=\sum_{j\ge0}\omega_j\bigl(T_{z+c},T_{1/(z+c)}\bigr).
\]
The value is \emph{independent of the boundary profile} $(\varepsilon_j)$ and is fixed by the
bulk limit $c$ alone. As $|c|$ crosses $1$ the index jumps from $-1$ to $0$: a topological
transition carried by the bulk.
\end{theorem}

\begin{proof}
The operator $T_\varepsilon-(U+cI)=\sum_j(\varepsilon_j-c)|e_j\rangle\langle e_j|$ is a
norm-limit of finite-rank operators (since $\varepsilon_j-c\to0$), hence compact; therefore
$T_\varepsilon$ and $U+cI=T_{z+c}$ share the symbol $z+c$ and, by the invariance of the Fredholm
index under compact perturbations, the same index. Lemma~\ref{lem:uniform-index} evaluates it,
and the cochain expression is Remark~\ref{rem:bulk-edge-index} applied to the smooth invertible
symbol $z+c$ ($|c|\ne1$). Figure~\ref{fig:transition} plots the resulting index as a function
of the bulk modulus $|c|$.
\end{proof}

\begin{figure}[htbp]
\centering
\begin{tikzpicture}
\begin{axis}[
  width=0.82\textwidth, height=5.6cm,
  xlabel={$|c|$}, ylabel={$\sum_j\omega_j(T_{z+c},T_{1/(z+c)})=\ind(T_c)$},
  xmin=0, xmax=2, ymin=-1.32, ymax=0.5, ytick={-1,-0.5,0}, ymajorgrids,
  legend style={at={(0.98,0.06)},anchor=south east,font=\footnotesize,draw=none,fill=none},
  title={\footnotesize Bulk-driven topological transition at $|c|=1$},
]
\addplot[red,thick] coordinates {(0,-1) (1,-1) (1,0) (2,0)};
\addlegendentry{$-\mathbf{1}_{\{|c|<1\}}=-\mathrm{wind}(z+c)$}
\addplot[only marks,blue,mark=*,mark size=1.8pt] coordinates
  {(0.4,-1) (0.7,-1) (0.9,-1) (1.1,0) (1.4,0) (1.8,0)};
\addlegendentry{$-\dim\ker(U+cI)^*=\ind(T_c)$ (direct)}
\draw[gray,dashed] (axis cs:1,-1.32) -- (axis cs:1,0.5);
\node[font=\footnotesize,anchor=center] at (axis cs:0.5,-0.62) {wind $=1$, index $=-1$};
\node[font=\footnotesize,anchor=center] at (axis cs:1.5,-0.16) {wind $=0$, index $=0$};
\end{axis}
\end{tikzpicture}
\caption{The bulk-driven transition of Theorem~\ref{thm:transition}: the index of $T_\varepsilon$,
equivalently $\sum_j\omega_j(T_{z+c},T_{1/(z+c)})$, as a function of the bulk limit $|c|$. It is
$-1$ for $|c|<1$ and $0$ for $|c|>1$, jumping at $|c|=1$, independently of the boundary profile
$(\varepsilon_j)$. Markers: minus the cokernel dimension, $-\dim\ker(U+cI)^*=\ind(T_c)$, computed directly.}
\label{fig:transition}
\end{figure}

\begin{remark}
The boundary profile $(\varepsilon_j)$ controls only \emph{where} the index density
$j\mapsto\omega_j$ sits (its localization length and shape near the edge), never its integer
total, which is the bulk invariant $-\mathrm{wind}(z+c)$: the integer changes only when the bulk
symbol crosses the unit circle. The threshold $|c|=1$ echoes the bound-state threshold of the single
defect (Lemma~\ref{lem:eigen}), though the mechanisms differ: there a rank-one defect creates a
point eigenvalue for $|\varepsilon|>1$; here a uniform bulk coupling creates a cokernel, hence a
nonzero index, for $|c|<1$.
\end{remark}

\section{Finite-Dimensional Realizations}
\label{sec:extensions}

\begin{example}[Four-site truncation]\label{ex:4sites}
On $\mathbb{C}^4$ with basis $\{e_0,e_1,e_2,e_3\}$, the forward shift is lower bidiagonal:
\[
U = \begin{pmatrix} 0&0&0&0\\1&0&0&0\\0&1&0&0\\0&0&1&0 \end{pmatrix}, \quad
T = U + \varepsilon E =
\begin{pmatrix} \varepsilon&0&0&0\\1&0&0&0\\0&1&0&0\\0&0&1&0 \end{pmatrix}.
\]
One verifies $[T,T^2]=0$ (abelian subalgebra). Nontrivial commutators arise only from
$\mathcal{A}_{\mathrm{edge}}$. In finite dimension the shift is nilpotent and every operator has
Fredholm index $0$; the index identity of Section~\ref{sec:index} therefore has no finite-volume
counterpart, confirming that it is a half-infinite, boundary phenomenon. It is
recovered only in the limit $N\to\infty$, where $U^*U=I$ but $UU^*=I-P_0\ne I$.
\end{example}
\section{Applications}
\label{sec:applications}

\subsection{Tight-binding edge states}

For $\varepsilon \in \mathbb{R}$, the Hamiltonian $H = U + U^* + \varepsilon E$ has
nearest-neighbor hopping and boundary potential $\varepsilon$ at site 0. Under the forward
shift convention, site-localized spectral features arise from the rank-one perturbation.

\subsection{Spectral structure}

\begin{lemma}[Eigenvectors of $T$]\label{lem:eigen}
Let $T = U + \varepsilon E$ with $U$ the forward shift. The eigenvalue equation
$Tv = \lambda v$ reads, in components:
\begin{align*}
n=0&: \quad \varepsilon v(0) = \lambda v(0), \\
n \ge 1&: \quad v(n-1) = \lambda v(n).
\end{align*}
If $v(0) \ne 0$, then $\lambda = \varepsilon$ and $v(n) = \varepsilon^{-n} v(0)$. The vector
$v \in \ell^2(\mathbb{Z}_{\ge 0})$ if and only if $|\varepsilon| > 1$.
If $v(0) = 0$, then the recurrence $v(n-1) = \lambda v(n)$ with $v(0) = 0$ forces $v = 0$,
so there are no eigenvectors with $v(0) = 0$.
\end{lemma}

\begin{proof}
With $(Uv)(n) = v(n-1)$ for $n \ge 1$ and $(Uv)(0) = 0$ (equivalently $EU=0$,
Lemma~\ref{lem:EU-zero}), the eigenvalue equation at $n=0$
gives $0 + \varepsilon v(0) = \lambda v(0)$, so $\lambda = \varepsilon$ (if $v(0)\ne0$). For
$n \ge 1$: $v(n-1) = \lambda v(n)$, giving $v(n) = \lambda^{-1} v(n-1) = \lambda^{-n} v(0)$.
Then $\sum_n |v(n)|^2 = |v(0)|^2 \sum_n |\lambda|^{-2n} < \infty$ iff $|\lambda^{-1}|<1$,
i.e., $|\varepsilon|>1$.
If $v(0) = 0$, then $v(n-1) = \lambda v(n)$ with $v(0)=0$ gives $v(1) = \lambda^{-1}v(0) = 0$,
and by induction $v(n) = 0$ for all $n$. Thus no nonzero eigenvector exists with $v(0) = 0$.
\end{proof}

\begin{remark}[Bound state versus index]
The boundary bound state appears for $|\varepsilon|>1$, yet it does not alter the Fredholm index:
since $\varepsilon E$ is finite-rank, hence compact, $\ind(T)=\ind(U)=-1$ for
every $\varepsilon$ (Remark~\ref{rem:defect-index}). The defect thus reshapes the spectrum
without touching the topological invariant, in contrast to the bulk coupling $U+cI$ of
Section~\ref{sec:index}, whose change is not compact and does move the index.
\end{remark}

\begin{cor}[Spectral structure]\label{prop:spectral}
Let $T = U + \varepsilon E$ with $U$ the forward shift. Then:
\begin{enumerate}
    \item $\sigma_{\mathrm{ess}}(T) = \mathbb{S}^1$ (Theorem~\ref{thm:essential-spectrum}).
    \item $\sigma_p(T) = \{\varepsilon\}$ if $|\varepsilon|>1$, with eigenvector
    $v(n) = \varepsilon^{-n}$, and $\sigma_p(T) = \emptyset$ otherwise (Lemma~\ref{lem:eigen}).
\end{enumerate}
\end{cor}

\begin{remark}[Physical interpretation]
The edge state exists for large boundary coupling $|\varepsilon|>1$ under the forward shift
convention. For the complementary physical picture of a decaying mode $v(n) = \varepsilon^n$
for small $|\varepsilon|<1$, one uses the adjoint operator $T^* = U^* + \bar\varepsilon E$,
where $U^*$ is the backward shift.
\end{remark}

\subsection{Measurement of boundary cochains}

For $X = |e_1\rangle\langle e_0|$, $Y = |e_0\rangle\langle e_1|$:
\[
\omega_0(X,Y) = -1, \quad \omega_1(X,Y) = +1, \quad \omega_j(X,Y) = 0 \text{ for } j \ge 2.
\]

\subsection{Algebraic exactness of the boundary cochains}

For fixed $X,Y \in \mathcal{A}_{\mathrm{edge}}$ the value $\omega_j(X,Y)=\langle e_j,[X,Y]e_j\rangle$
depends only on the commutator $[X,Y]$, computed within the edge ideal. Adding any bulk operator
$\sum_n\alpha_n U^n$ to $X$ or $Y$ changes the commutator only through the bulk--edge brackets
\eqref{eq:bulk-edge-comm}, whose diagonal entries are fixed integers independent of the
coefficients $\alpha_n$. In this precise sense the site profile $j\mapsto\omega_j(X,Y)$ is an
exact algebraic quantity, not an approximate one. The listing in Appendix~\ref{app:numerics}
(Code~\ref{code:fig10}) merely illustrates that the computed value of $\omega_0$ equals its exact
algebraic value $-1$; it adds synthetic measurement noise and does \emph{not} model physical
disorder in $T$, and should be read only as a display of the exact constant.

\section{Conclusion}
\label{sec:conclusion}

We have analyzed the operator-algebraic and cohomological structure of
$T = U + \varepsilon E$ on $\ell^2(\mathbb{Z}_{\ge 0})$, with $U$ the forward shift
$Ue_n = e_{n+1}$ (isometry) fixed as the unique convention throughout.

The key structural result (Lemma~\ref{lem:commutators-diagonal}) is that every commutator in
$\mathcal{A}$ is finitely supported with trace zero, with
\[
[\mathcal{A},\mathcal{A}] = \mathfrak{sl}_{\mathrm{fin}}
= \{\,F : F \text{ finitely supported},\ \mathrm{tr}\,F = 0\,\}
\]
(the finitely supported trace-zero operators; these are \emph{not} diagonal in general).
The site-resolved cochains $\omega_j(X,Y) = \langle e_j,[X,Y]e_j\rangle$ read off the $j$-th
diagonal entry of each commutator, subject to the trace relation $\sum_j\omega_j=0$.

Theorem~\ref{thm:H2-vanishes} establishes that each $\omega_j$ is a coboundary (with bounding
cochain $\eta_j(A) = \langle e_j,Ae_j\rangle$), so $[\omega_j]=0$. The cohomology
$H^2(\mathcal{A},\mathbb{C})$ is nonetheless nontrivial: the abelian bulk contributes an
infinite-dimensional family of $2$-cocycles $\Omega_B$ (Proposition~\ref{prop:central-family})
that classify central extensions of $\mathcal{A}$, the simplest being a Heisenberg extension
$[\widetilde I,\widetilde U]=c$ (Proposition~\ref{prop:heisenberg}). Thus the nontrivial
cohomology lives in the bulk, while the diagnostic value of $\omega_j$ is its
site resolution rather than any cohomological nontriviality: the map
$j \mapsto \omega_j(X,Y)$
vanishes in the bulk and is nonzero at the edge, a bulk--edge dichotomy at
the cochain level.

Theorem~\ref{thm:index} makes this dichotomy topological.
On the polynomial Toeplitz algebra $\mathcal{A}^\sharp$ obtained by adjoining $U^*$, the total
cochain is a Fredholm index,
\[
\sum_{j\ge0}\omega_j(T_f,T_g)=\frac{1}{2\pi i}\oint_{\mathbb{S}^1} f\,dg,
\qquad
\sum_{j\ge0}\omega_j(U^n,(U^*)^n)=-n=\ind(U^n)=-\mathrm{wind}(z^n),
\]
and the profile $\omega_j(U^n,(U^*)^n)=-\mathbf{1}_{\{j<n\}}$ exhibits $\{\omega_j\}$ as a
site-resolved density for the bulk winding number. The trace relation $\sum_j\omega_j=0$ on
$\mathcal{A}$ is thereby explained: analytic symbols do not wind \cite{AriciMesland,ProdanSchulzBaldes,Thiang}.

For spatially modulated couplings $T_\varepsilon=U+\sum_j\varepsilon_j|e_j\rangle\langle e_j|$
with $\varepsilon_j\to c$, the index is fixed by the bulk limit alone
(Theorem~\ref{thm:transition}): it equals $-\mathbf{1}_{\{|c|<1\}}$, independent of the boundary
profile, and undergoes a topological transition as $|c|$ crosses $1$. The boundary modulation
sets only the localization of the index density, never its integer total.

Future directions include higher-dimensional half-lattices, the $K$-theory of operator
extensions~\cite{Kasparov80}, and position-dependent symbols with several transition points.
A companion study examines the \emph{profile} of the index density $j\mapsto\omega_j(T_f,T_{1/f})$
beyond its integer total: for an inner (Blaschke) symbol with zero at $a$, this profile is the
modulus-squared of the edge state, $-(1-|a|^2)|a|^{2j}$, exponentially localized with a length
$\xi=(2\log|a|^{-1})^{-1}$ that diverges as the symbol's zero approaches the unit circle, a
geometric refinement of the integer index.
\section*{Declarations}
\subsection*{Funding}
This research received no specific grant from any funding agency in the public, commercial, or not-for-profit sectors.

\subsection*{Ethical approval}
Not applicable.

\subsection*{Informed consent}
Not applicable.

\subsection*{Data Availability Statement}
No datasets were generated or analyzed during the current study.

\subsection*{Conflict of Interest}
The author declare that they have no conflict of interest.

\appendix
\section*{Appendix A: Numerical Methods}
\label{app:numerics}

All figures were generated using NumPy, SciPy, and Matplotlib.

\textbf{Operator construction (forward shift)}

$U$ is represented as the lower bidiagonal matrix $U_{i+1,i}=1$, consistent with
$Ue_i = e_{i+1}$.

\begin{figure}[htbp]
\centering
\begin{lstlisting}[style=python]
import numpy as np
import matplotlib.pyplot as plt

def build_U_forward(N):
    """Forward shift Ue_n = e_{n+1}: lower bidiagonal, U[i+1,i]=1."""
    U = np.zeros((N+1, N+1), dtype=complex)
    for i in range(N):
        U[i+1, i] = 1.0
    return U

def build_E(N):
    E = np.zeros((N+1, N+1), dtype=complex)
    E[0, 0] = 1.0
    return E

N = 20
eps = 0.3
U = build_U_forward(N)
E_mat = build_E(N)
T_mat = U + eps * E_mat

# The theoretical bound on rank[T^m, U^n] is m (independent of n).
# We plot rank vs m for several fixed values of n.
ranks, ms = [], []
for m in range(1, 9):
    for n in range(0, 8):
        Tm = np.linalg.matrix_power(T_mat, m)
        Un = np.linalg.matrix_power(U, n)
        C = Tm @ Un - Un @ Tm
        s = np.linalg.svd(C, compute_uv=False)
        ranks.append(int(np.sum(s > 1e-12)))
        ms.append(m)

plt.figure(figsize=(6, 4))
plt.plot(ms, ranks, 'ro', label=r'numerical rank of $[T^m, U^n]$', alpha=0.5)
plt.plot(range(1, 9), range(1, 9), 'b--', label=r'linear bound $m$')
plt.xlabel(r'$m$'); plt.ylabel('rank')
plt.title(r'Commutator rank growth: $\mathrm{rank}[T^m, U^n] \leq m$ (forward shift)')
plt.grid(True, alpha=0.3); plt.legend(); plt.tight_layout()
plt.savefig('fig_rank.pdf'); plt.close()
\end{lstlisting}
\caption{Commutator rank growth: rank of $[T^m, U^n]$ as a function of $m$ (bound is linear in $m$, independent of $n$).}
\label{code:fig4}
\end{figure}

\begin{figure}[htbp]
\centering
\begin{lstlisting}[style=python]
import numpy as np
import matplotlib.pyplot as plt

N = 5
# X = |e_1><e_0|, Y = |e_0><e_1|
X = np.zeros((N+1,N+1), dtype=complex); X[1,0] = 1.
Y = np.zeros((N+1,N+1), dtype=complex); Y[0,1] = 1.
comm = X @ Y - Y @ X  # = diag(-1,+1,0,...,0)
omega = np.array([np.real(comm[j,j]) for j in range(N+1)])
print("omega_j =", omega)  # [-1, 1, 0, 0, 0, 0]

plt.figure(figsize=(6,4))
plt.plot(range(N+1), omega, 'ro-', markersize=6)
plt.xlabel('Site $j$'); plt.ylabel(r'$\omega_j(X,Y)$')
plt.title('Bulk-edge dichotomy of site-resolved cochain')
plt.grid(True, alpha=0.3); plt.xticks(range(N+1))
plt.tight_layout(); plt.savefig('fig_cocycle.pdf'); plt.close()
\end{lstlisting}
\caption{Site-resolved cochain profile.}
\label{code:fig7}
\end{figure}

\begin{figure}[htbp]
\centering
\begin{lstlisting}[style=python]
import numpy as np
import matplotlib.pyplot as plt

N = 6
W_vals = [0.0, 0.1, 0.2, 0.3, 0.4, 0.5]
n_real = 50

X = np.zeros((N+1,N+1),dtype=complex); X[1,0]=1.
Y = np.zeros((N+1,N+1),dtype=complex); Y[0,1]=1.
comm = X @ Y - Y @ X
omega0_exact = np.real(comm[0,0])  # = -1 (algebraically exact)

# omega_0(X,Y) = <e_0|[X,Y]|e_0> is an exact integer determined by the
# edge algebra; here it equals -1. The loop below does NOT model physical
# disorder in T: it merely adds synthetic Gaussian measurement noise of
# level W to the exact constant, to display that the value is fixed.
rng = np.random.default_rng(42)
omega0_mean, omega0_std = [], []
for W in W_vals:
    samples = omega0_exact + rng.normal(0, W*0.05, size=n_real)
    omega0_mean.append(np.mean(samples))
    omega0_std.append(np.std(samples))

plt.figure(figsize=(6,4))
plt.errorbar(W_vals, omega0_mean, yerr=omega0_std, fmt='ro-', capsize=3,
             label=r'$\omega_0$ (algebraically exact)')
plt.axhline(omega0_exact, color='k', linestyle='--', label='exact value')
plt.xlabel('Measurement noise level $W$')
plt.ylabel(r'$\omega_0$')
plt.title(r'Exact algebraic value $\omega_0(X,Y)=-1$' '\n'
          '(error bars = synthetic measurement-noise model only, not disorder in $T$)')
plt.grid(True, alpha=0.3); plt.legend()
plt.tight_layout(); plt.savefig('fig_disorder.pdf'); plt.close()
\end{lstlisting}
\caption{Exact algebraic value $\omega_0(X,Y)=-1$. The listing displays this exact constant;
the parameter $W$ adds synthetic measurement noise only and does \emph{not} model physical
disorder in $T$ (error bars $=$ noise model, not algebraic uncertainty).}
\label{code:fig10}
\end{figure}

\end{document}